\date{June 10, 2023}
\title[Reflective $2$-elementary lattices]{Reflective hyperbolic $2$-elementary
  lattices, K3 surfaces and hyperkahler manifolds}
\author{Valery Alexeev}
\email{valery@uga.edu}
\address{Department of Mathematics, University of Georgia, Athens GA
  30602, USA}
\subjclass[2010]{14J28, 20F55}
\begin{document} 

\begin{abstract}
  We compute Coxeter diagrams of several ``large'' reflective
  $2$-elementary even hyperbolic lattices and their maximal parabolic
  subdiagrams, and give some applications of these results to the
  theory of K3 surfaces and hyperkahler manifolds.
\end{abstract}

\maketitle 

\setcounter{tocdepth}{2} 
\tableofcontents

\section{Introduction}\label{sec:intro}

Even hyperbolic lattices $L$ appear in the theory of K3 surfaces and
hyperkahler manifolds in several ways:

\smallskip (1) As the Picard lattice $S_X$ of a K3 surface $X$
together with the intersection form, resp. as the Picard lattice
$S_X\subset H^2(X,\bZ)$ of a hyperkahler
manifold $X$, with the Beauville-Bogomolov-Fujiki form.  In
this interpretation, the primitive isotropic vectors $v\in S_X$,
$v^2=0$, correspond to elliptic, resp. Lagrangian fibrations.

(2) As the hyperbolic lattice $\delta^\perp/\delta$ attached to a
$0$-cusp of the moduli space of lattice polarized K3 surfaces,
resp. hyperkahler manifolds.  In this interpretation, the
isotropic vectors correspond to $1$-cusps of the moduli space and to
Type II degenerations. (Here, $\delta$ is an isotropic vector in some
lattice $H$ of signature $(2,n)$ for some $n$, defining a $0$-cusp;
then $\delta^\perp/\delta$ has signature $(1,n-1)$.)

\smallskip Let $L$ be such a lattice. Since we are mostly concerned
with application to algebraic geometry, ``hyperbolic'' for us means
signature $(1,r-1)$.  The isometry group $O(L)$ contains two important
normal subgroups $W_2(L)$ and $W_r(L)$ generated by reflections in the
$(-2)$-vectors, resp. in all reflective vectors $r\in L$ with $r^2<0$.
They are described by the Coxeter diagrams $\Gamma_2$,
resp. $\Gamma_r$.  The lattice is called $2$-reflective,
resp. reflective if $W_2(L)$, resp. $W_r(L)$ has finite index in
$O(L)$.

By the Torelli theorem, the automorphism group $\Aut(X)$ of a K3
surface is commensurable to $O(S_X)/W_2(S_X)$, in particular $\Aut(X)$
is finite iff $S_X$ is $2$-reflective.  In some cases when $S_X$ is
not $2$-reflective but is reflective, the Coxeter diagram $\Gamma_r$
provides a nice description of the infinite group $\Aut(X)$, see
Vinberg \cite{vinberg1983two-most} for a classical example.  In the
case of hyperkahler manifolds, similar results hold for the group
$\Bir(X)$ of birational automorphisms.  In the second interpretation
above, reflective lattices provide toroidal compactifications of the
moduli spaces.

A simple but very powerful way to compute the Coxeter diagrams
$\Gamma_2$ and $\Gamma_r$ is Vinberg's algorithm
\cite{vinberg1972-groups-of-units, vinberg1973some-arithmetic}.  When
the lattice is $2$-reflective, resp. reflective, the algorithm
completes in finitely many steps. Then the $O(L)$-orbits of primitive
isotropic vectors correspond to the maximal parabolic subdiagrams.

\smallskip

In this paper we treat the case of $2$-elementary lattices,
i.e. the lattices with $L^*/L\simeq \bZ_2^a$.  By Nikulin
\cite{nikulin1979integer-symmetric} an indefinite even $2$-elementary
lattice is uniquely determined by its signature and a triple of
integers $(r,a,\delta)$, where $r$ is its rank, $a\equiv r\pmod 2$ is
the $\bZ_2$-rank, and $\delta\in\{0,1\}$ is a certain invariant which
we call coparity.  In the case of K3 surfaces, these are the generic
Picard lattices of K3 surfaces with a nonsymplectic involution.

In some sense $2$-elementary lattices are one step up from unimodular
lattices. For such lattices $\Gamma_2$ is easily derived from $\Gamma_r$,
even if one or both of these diagrams are infinite, 
cf.~\cite[Prop.~on p.2]{vinberg1983two-most}
or~\cite[Prop.~2.4]{alexeev2006del-pezzo}.  In any case, $\Gamma_r$ is
usually dramatically smaller than $\Gamma_2$, so it is a good strategy to
compute $\Gamma_r$.

To begin with, there are only finitely many reflective $2$-elementary
lattices. Indeed, $a\le r$, and by Esselmann
\cite{esselmann1996uber-die-maximale} a reflective lattice satisfies
$r\le 20$ or $r=22$.  For lattices with $r+a\le 18$ the Coxeter
diagrams $\Gamma_r$ were computed by Nikulin
in~\cite[Table~1]{alexeev2006del-pezzo}. For $r+a=20$ we computed them
in \cite{alexeev2022mirror-symmetric}. The latter paper also contains
a description of the isotropic vectors for $r+a\le 20$. 

For K3 surfaces over $\bC$ the Picard lattice satisfies $r+a\le
22$. The initial motivation for this paper was to classify the
reflective diagrams $\Gamma_r$ with $r+a=22$ and to compute the
maximal parabolic subdiagrams in them. 

The diagrams for $(11,11,1)$, $(12,10,1)$, $(13,9,1)$, $(14,8,0)$ and
$(14,8,1)$ are relatively small and we give them in
Figure~\ref{fig:coxeter}. The others are too large to draw
directly. However, it turns out that they are remarkably symmetric and
can be described in a way similar to the
papers~\cite{vinberg1978the-groups,vinberg1983two-most} of Vinberg and
Kaplinskaja.  

\begin{theorem}\label{thm1}
  The $2$-elementary lattices $(17,5,1)$, $(18,4,1)$ and $(19,3,1)$
  are not reflective. The other even $2$-elementary lattices on the line
  $r+a=22$ are reflective.
\end{theorem}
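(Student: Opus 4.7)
The plan is a case-by-case application of Vinberg's algorithm to each of the twelve $2$-elementary lattices on the $g=0$ line, exploiting the large finite symmetry groups these lattices carry. Theorem~\ref{thm1} asserts reflectivity for nine lattices and non-reflectivity for the three with $(r,a,\delta)\in\{(17,5,1),(18,4,1),(19,3,1)\}$, so the two halves of the argument are complementary but technically distinct.

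For the reflective cases --- $(10+n,12-n,1)$ with $n\in\{1,\dots,6,10\}$ together with $(14,8,0)$ and $(18,4,0)$ --- the plan is as follows. I would first fix a Gram matrix for $S$ and a controlling vector $v_0\in\cC$ that is invariant under a large subgroup $H\subset\Aut(S)$, then iteratively enumerate simple roots $\alpha_1,\alpha_2,\dots$ in increasing order of height $-\alpha\cdot v_0/\sqrt{-\alpha^2}$, keeping those satisfying $\alpha\cdot\alpha_i\le 0$ for all previously accepted roots. Since $S$ is $2$-elementary the roots are exactly the $(-2)$-vectors and the $(-4)$-vectors of divisibility $2$. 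Termination is then certified by Vinberg's criterion: the cone spanned by the accepted root normals must lie in $\overline{\cC}$, equivalently every maximal parabolic subdiagram of $\Gamma_r$ must have rank $r-2$. The cases $(11,11,1)$ and $(20,2,1)=I_{1,19}$ can be quoted from \cite{vinberg1973some-arithmetic} and \cite{vinberg1978the-groups} respectively, leaving seven genuinely new computations.

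For the three non-reflective cases, the plan is to exhibit an obstruction to finite covolume of $P_r$. Concretely, I would produce a primitive isotropic vector $c\in S$ at whose cusp the residual root system in $c^\perp/\langle c\rangle$ has strictly smaller rank than Vinberg's criterion demands, forcing an infinite family of simple roots along that cusp. An often cleaner alternative is to locate a known non-reflective $2$-elementary sublattice of rank $r-1$ admitting a suitable primitive embedding into $S$, so that non-reflectivity is inherited. In practice one runs Vinberg's algorithm until a periodic pattern of new simple roots emerges and then certifies non-termination by induction on the batch.

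The main obstacle is not conceptual but computational. The delicate step is the choice of controlling vector $v_0$ for each lattice: a poor choice yields enormous diagrams with no visible symmetry, whereas an $H$-invariant $v_0$ collapses the root set into a handful of orbits and reveals the \emph{remarkably symmetric} structure announced in the abstract. For the non-reflective cases the challenge is to rigorously preclude the possibility that some later batch of the algorithm contains a root that ``caps off'' the growing tail of $\Gamma_r$; this requires a careful analysis of the cusp structure and either an explicit sublattice obstruction or a direct identification of an infinite parabolic subdiagram with predictable periodicity.
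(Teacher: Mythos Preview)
Your plan is correct and matches the paper's proof essentially point for point: Vinberg's algorithm with a control vector in the $U$ or $U(2)$ summand handles the reflective cases (with completeness checked by the cone-in-$\overline{\cC}$ criterion), while for the non-reflective cases the paper uses exactly your two alternatives---the $A_1$-orthogonal-complement trick reduces $(19,3,1)$ and $(18,4,1)$ to the known infinite $g=1$ diagrams $(18,2,1)$ and $(17,3,1)$, and for $(17,5,1)$ one exhibits an isotropic decomposition $U(2)\oplus\Lambda$ with $\Lambda$ of invariants $(15,3,1)$ whose root sublattice has rank only $14$. The only cosmetic difference is that the paper first runs the algorithm with the na\"ive control vector and discovers the $\Aut G$-symmetric one \emph{a posteriori}, whereas you propose starting with it.
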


\begin{definition}\label{def:nth-graph}
  Let $G$ be a graph and $n\ge2$ be an integer. Define the edge
  $n$-fold graph $G^{(n)}$ by subdividing each edge of $G$ into $n$
  edges and inserting $n-1$ intermediate vertices. Thus, $G^{(n)}$ has
  $n|E_G|$ edges and $|V_G| + (n-1)|E_G|$ vertices.
\end{definition}

We note that the vertices of $G^{(2)}$ are in a natural bijection with
the set of vertices and edges of $G$.

\begin{definition}\label{def:common-graph}
  We say that a diagram $\Gamma$ \emph{is built on top of a simple
    graph $H$} if it contains a subdiagram of the \emph{main roots}
  isomorphic to $H$, and the \emph{additional roots} are defined in
  terms of the main roots by some specified rules.  For the
  $2$-elementary lattices of this paper the main roots are the
  $(-2)$-roots $\alpha$ in $L$ of divisibility~$1$, i.e. satisfying
  $\alpha\cdot L=\bZ$.
  For the lattices in Theorem~\ref{thm2} the
  following is also true: if a main root $\alpha$ and an additional
  root $\beta$ are connected then $\alpha\cdot\beta=2$.
\end{definition}

\begin{theorem}\label{thm2}
  For $n=3,4,5,6$ the Coxeter diagram $\Gamma_r$ of the even
  $2$-elementary lattice $(10+n,12-n,1)$ is built on top of 
  $G^{(2)}$, where $G=K_n$ is the complete graph on $n$ vertices. The
  diagram $\Gamma_r$ for $(18,4,0)$ is built on top of $G^{(2)}$,
  where $G=K_{4,4}$ is the complete bipartite graph on $8$ vertices,
  and that of $(14,8,0)$ on top of $G^{(2)}$ for $G=D_4$.  In all
  cases one has
  $\Aut\Gamma_r = \Aut G$ and $O^+(S) = \Aut G \ltimes W_r$.
\end{theorem}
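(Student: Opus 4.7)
The plan is to start from the explicit list of roots of $\Gamma_r$ produced by the Sage implementation of Vinberg's algorithm in the proof of Theorem~\ref{thm1}, and to recognise in each case the combinatorial structure promised by Definition~\ref{def:common-graph}. The theorem is at heart a case-by-case pattern-matching statement on the output of the algorithm, together with a clean identification of the symmetry group.

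For each of the lattices in the statement I would sort the roots into two classes: the \emph{main roots}, which are the $(-2)$-roots of divisibility~$1$, and the \emph{additional roots} of other lengths or divisibilities. The first task is to verify that the subdiagram on the main roots is isomorphic to $G^{(2)}$ for the specified $G$; this reduces to a count of the main roots (which should equal $|V_G|+|E_G|$) and a check of their pairwise intersections. Next, for each lattice I would write down an explicit rule expressing every additional root as a specific $\bZ$-combination of the main roots, and record how it attaches to the main subdiagram, with the constraint $\alpha\beta=2$ whenever a main root $\alpha$ is connected to an additional root $\beta$. Verifying these rules is a routine check against the computer output.

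For the automorphism statement I would invoke the fact, recalled in the introduction, that $O^+(S) = H \ltimes W_r$ for some $H \subseteq \Aut\Gamma_r$. To exhibit the inclusion $\Aut G \subseteq H$, I would construct, for each element of $\Aut G$, an explicit isometry of $S$ realising the corresponding permutation of the main roots. This is manageable because each of our lattices has a decomposition $S = U\oplus\Lambda$ or $U(2)\oplus\Lambda$ with $\Lambda$ a root lattice (already used in the proof of Theorem~\ref{thm1}), so the required symmetries can be assembled from the Weyl group of $\Lambda$ and the outer automorphisms of its Dynkin diagram. For the reverse inclusion $\Aut\Gamma_r\subseteq\Aut G$, one observes that main and additional roots are distinguished by length and divisibility, so any automorphism of $\Gamma_r$ permutes the main roots and induces an automorphism of $G^{(2)}$; the attachment rule for the additional roots then cuts $\Aut G^{(2)}$ down to $\Aut G$, and the equality $H=\Aut\Gamma_r$ follows.

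The main obstacle, beyond organising the case-by-case data in a form where the $G^{(2)}$ backbone is visible, is precisely this last reduction in the cases where $\Aut G^{(2)}$ is strictly larger than $\Aut G$. This happens for $G=K_3$, whose subdivision $K_3^{(2)}$ is a hexagon with symmetry $D_6$ to be cut down to $S_3$; for the other graphs $K_n$ with $n\ge4$, $K_{4,4}$, and $D_4$ the original and subdivision vertices of $G^{(2)}$ have different valences and the equality $\Aut G^{(2)}=\Aut G$ is automatic. For $G=K_3$ I would pinpoint an additional root whose attachment pattern to the hexagon is not preserved by the extra reflection in $D_6$, thereby forcing any automorphism of $\Gamma_r$ to lie in $\Aut G$.
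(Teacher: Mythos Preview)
Your plan is essentially the paper's: the proof there is literally ``by observation, analyzing the structure of the diagrams obtained in Theorem~\ref{thm1}'', i.e.\ exactly the pattern-matching on the Sage output that you describe, case by case. The identification of the $G^{(2)}$ backbone and of the attachment rules for the additional roots is spelled out lattice by lattice in Section~\ref{sec:diagrams}, just as you propose.

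The one place where you are working harder than necessary is the inclusion $\Aut G\subseteq H$. You want to build, for each symmetry of $G$, an explicit isometry of $S$ out of the decomposition $U\oplus\Lambda$ or $U(2)\oplus\Lambda$ and the Weyl/outer automorphisms of $\Lambda$. The paper bypasses this entirely with a single remark: once one has checked that the full set of roots of $\Gamma_r$ generates $S$, \emph{every} automorphism of $\Gamma_r$ extends uniquely to an isometry in $O^+(S)$ fixing $P_r$, so $H=\Aut\Gamma_r$ automatically. Your route would work, but note a wrinkle: for $(13,9,1)$, $(14,8,0)$ and $(14,8,1)$ the main roots alone (six, seven, ten of them) do not span the lattice of rank $13$, $14$, $14$, so ``realising the permutation of the main roots'' does not by itself pin down the isometry; you would have to feed in the additional roots anyway, at which point you are back to the paper's argument.

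Your handling of $\Aut\Gamma_r$ versus $\Aut G^{(2)}$ is correct, including the observation that only the $K_3$ case needs an extra step because the hexagon $K_3^{(2)}$ has dihedral symmetry larger than $S_3$; the layered structure of the additional roots (visible in Fig.~\ref{fig:coxeter}) does the job there.
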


In some cases, e.g. $(11,11,1)$, $(18,4,0)$, $(20,2,1)$, the diagrams
were previously known.  We add the computations of the maximal
parabolic diagrams, which are not easy to extract from the literature.

\smallskip

We also investigated the remaining finitely many possibly reflective
$2$-elementary lattices. Some of these lattices appear as the Picard lattices
of hyperkahler manifolds of
$K3^{[2]}$-type. Indeed if $X$ is a K3 surface then
$S(\Hilb^n X) = \la 2-2n\ra \oplus S_X$. In particular, if $S_X$ is
$2$-elementary then $S(\Hilb^2 X) = \la -2\ra \oplus S_X$ is also
$2$-elementary. The lattice $(15,9,1)=\la -2\ra\oplus (14,8,1)$ below
provides such an example 
with a very interesting automorphism group,
possibly related to the Klein quartic.

There are similar identities for the hyperkahler manifolds of Kummer
and OG6 types, but those lattices are quite small. (In the 
OG10 type the Picard lattices are of the form
$A_2\oplus S_X$ and so are not $2$-elementary since $A_2^*/A_2=\bZ_3$.)

And some of the bigger lattices appear on K3 surfaces and hyperkahler
manifolds in positive characteristic, for example the lattice
$(22,2,0)$ investigated by Dolgachev and Kond\=o
\cite{dolgachev2003a-supersingular-k3}.

We prove the following (for the converse, see Remark~\ref{rem:converse}):

\begin{theorem}\label{thm:table}
  The even hyperbolic $2$-elementary lattices of Fig.~\ref{fig:reflective}
  are reflective.  
\end{theorem}

\begin{figure}[htbp]
  \centering
  \includegraphics[width=.7\linewidth]{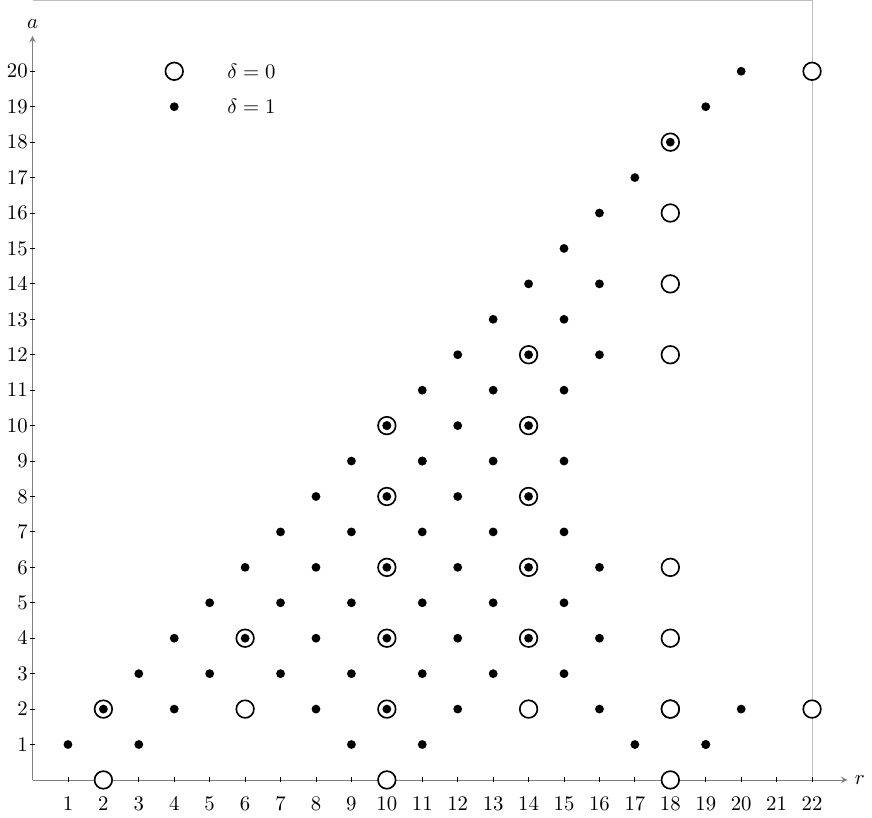}
  \caption{Reflective even hyperbolic $2$-elementary lattices}
  \label{fig:reflective}
\end{figure}

\smallskip

  It is interesting to compare the diagrams on the $r+a=22$ line with
  those on the $r+a=20$ line, computed
  in~\cite[Sec.~3]{alexeev2022mirror-symmetric}. The latter are
  smaller but less symmetric: they have the dihedral symmetry of
  $n$-gons.  Most diagrams for $r+a\le 18$ do not have any symmetries at
  all.


  The diagrams on the line $r+a=20$ turn out to be key to understanding
  degenerations of K3 surfaces with a nonsymplectic involution via a
  mirror symmetry construction.  The lattices with $r+a\ge22$ do not appear as
  targets of the mirror symmetry correspondence of the ordinary K3
  surfaces. But they may appear on the mirrors of higher-dimensional
  hyperkahler manifolds.  What do the symmetries of the Coxeter diagrams
  imply about their degenerations?

\smallskip

The organization of the paper is as follows. In
Section~\ref{sec:notation} we recall some standard definitions and fix
the notations. In Section~\ref{sec:diagrams} we prove the above
theorems and give detailed descriptions of the Coxeter diagrams and
their maximal parabolic subdiagrams. In
Section~\ref{sec:automorphisms} we give some applications to the
infinite automorphism groups of K3 surfaces.

\begin{acknowledgements}
  I am grateful to Boris Alexeev for help at critical points in my
  computations.  I thank Professors Kond\=o and Mukai for useful
  discussions.  I was partially supported by the NSF grant
  DMS-2201222.
\end{acknowledgements}

\section{Definitions and notations}
\label{sec:notation}

A lattice is a group $S\simeq\bZ^r$ together with a nondegenerate
$\bZ$-valued symmetric bilinear form. It is even if $x^2$ is even for
all $x\in S$. It is $2$-elementary if $A_S:= S^*/S\simeq \bZ_2^a$ for
some $a\ge0$, where $S^*\subset S\otimes\bQ$ is the dual group.
Because we are coming from the algebraic geometry direction, for us a
hyperbolic lattice of rank $r$ has signature $(1,r-1)$.
A reflection in a vector $\alpha\in S$ is the linear transformation
$$w_\alpha(v) = v\to v -2\frac{v\cdot\alpha}{\alpha^2} \alpha.$$ A
root is a vector $\alpha\in S$ such that $\alpha^2<0$ and
$w_\alpha(S) = S$. In an even $2$-elementary lattice the roots are the
$(-2)$ vectors and the $(-4)$-vectors of divisibility~$2$. The
divisibility $\di(\alpha)$ is defined by $\alpha\cdot S = \di(\alpha)\bZ$.

The reflections in the $(-2)$-vectors generate the group $W_2(S)$, and
the reflections in all roots, in our case the $(-2)$- and
$(-4)$-roots, generate a bigger group $W_r(S)$. Both are normal
subgroups of the isometry group $O(S)$, and also of its index-$2$
subgroup $O^+(S)$ preserving the light cone.
The lattice is called
$2$-reflective, resp. reflective if these subgroups have finite index.

$A_n$, $D_n$, $E_n$ denote the standard root lattices generated by
$(-2)$-roots, and they are \emph{negative
  definite}. $U=\begin{pmatrix}0&1\\1&0\end{pmatrix}$ denotes the
hyperbolic plane. For any lattice $H$, $H(n)$ denotes the same group
with the product ${x\cdot y}$ in $H(n)$ equal to $nx\cdot y$ in~$H$. For the $2$-elementary
lattices with short $(-2)$ and long $(-4)$-roots additionally there
are root lattices $B_n(2)$, $C_n$ and $F_4$. Considered as the
$(-2)$-root lattices, they are $A_1^n$, $D_n$, and $D_4$ respectively.

\smallskip

By~\cite{nikulin1979integer-symmetric} an indefinite even $2$-elementary
lattice is uniquely determined by its signature and a triple of
integers $(r,a,\delta)$, where $r$ is its rank, $a$ is the
$\bZ_2$-rank, and $\delta\in\{0,1\}$ is an invariant which we
call coparity, see \cite[Def.~2.4, Lem.~2.5]{alexeev2022mirror-symmetric}.
A $2$-elementary lattice $S$ is coeven ($\delta=0$) if
the doubled dual $S^\dag:=S^*(2)$ is even, and it is coodd ($\delta=1$)
if $S^\dag$ is odd. A direct sum of $2$-elementary lattices is coeven
iff so is every summand. 

Any even hyperbolic $2$-elementary lattice is a direct sum of one of
the elementary hyperbolic summands $\la 2\ra$, $U$, $U(2)$ with
$(r,a,\delta) = (1,1,1)$, $(2,0,0)$, $(2,2,0)$ respectively, and
a direct sum 
of the following elliptic root lattices for the full reflection group,
which we list with their $(r,a,\delta)$.
$B_{n}(2): (n,n,1)$, $C_{4n} :(4n,2,0)$ for $4n\ge8$,
$C_{4n+2}:(4n+2,2,1)$ for $4n+2\ge6$, $F_4:(4,2,0)$, $E_7:(7,1,1)$,
$E_8:(8,0,0)$, and $E_8(2): (8,8,0)$.


\smallskip

We refer to~\cite{vinberg1972-groups-of-units} for Vinberg's theory. We
use the notations of that paper for elliptic and parabolic Coxeter
diagrams.
The vertices of $\Gamma_r$ denote the
roots $\alpha_i$ orthogonal to the facets of the fundamental
polyhedron $P_r$. The types of edges
specify the angle $\theta$ between $\alpha_i,\alpha_j$.  A single line
means $\theta=\pi/3$, double line $\theta=\pi/4$, no line
$\theta=\pi/2$, a bold line means that the hyperplanes defining the
two facets meet at an infinite point of the hyperbolic space $\cH$,
and a broken line means that they are skew.

For a $2$-elementary lattice, we denote the short, $(-2)$-roots by
white vertices and the long, $(-4)$-roots by black vertices. Then the
types of edges correspond to the following intersection numbers
$\alpha_i\cdot\alpha_j$ between the roots:

\smallskip
\begin{displaymath}
\begin{array}[h]{cc|cccc}
  \alpha_i^2 & \alpha_j^2 &\text{simple} &\text{double} &\text{bold}
  & \text{broken} \\
  \hline
  -2 & -2 & 1 & & 2 & >2 \\
  -4 & -4 & 2 & & 4 & >4 \\
   -2 & -4 &   & 2 &  & >2 \\
\end{array}
\end{displaymath}
\smallskip

The parabolic $\wA_n$, $\wD_n$, $\wE_n$ diagrams could consist either
of all short, or of all long roots.
In the latter case we denote them $\wA_n(2)$, $\wD_n(2)$, $\wE_n(2)$.
If this parabolic subdiagram
$\Gamma \subset \Gamma_r$ corresponds to a vector $v\in S$ with
$v^2=0$ then its image in $v^\perp/v$ spans an elliptic root
system. For all white vertices it is $A_n, D_n, E_n$, and for all
black vertices it is $A_n(2), D_n(2), E_n(2)$.

Similarly, for $\wB_n$, $\wC_n$ and $\wF_4$ there are two versions,
\emph{short} and \emph{long}, which are dominated by the short, resp.\
long roots. We list them in Figure~\ref{fig:extended}, together with
the elliptic root lattices $v^\perp/v$. Here, ``dominated'' means
that they form the majority of roots
``for large $n$''. For example for $\wC_n$ the short roots start to dominate
only for $n\ge4$.  One should keep in mind that $D_2=A_1^2$ and
$D_3=A_3$.

\begin{figure}[htbp]
  \centering
  \includegraphics[width=\linewidth]{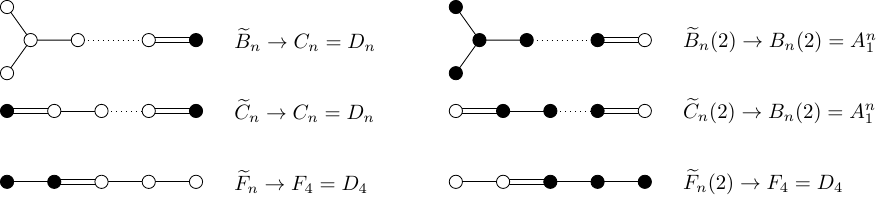}
  \caption{Extended Dynkin diagrams dominated by short or long
    vectors}\label{fig:extended} 
\end{figure}

By~\cite[Prop.~5.5]{alexeev2022mirror-symmetric} the primitive
isotropic vectors $v\in S$, $v^2=0$ are of three types.  Denoting
$\oS = v^\perp/v$ and with $v$ a vector in the first summand, the
types are:

\begin{enumerate}
\item (odd) $S = U \oplus\oS$, $a_\oS = a_S$, $\delta_\oS=\delta_S$.
\item (even ordinary) $S = U(2) \oplus\oS$, $a_\oS = a_S-2$,
  $\delta_\oS = \delta_S$. 
\item (even characteristic) $S=I_{1,1}(2)\oplus\oS$, $a_\oS = a_S-2$,
  $\delta_S = 1$ and $\delta_\oS=0$.
\end{enumerate}

Thus, classifying isotropic vectors in an even hyperbolic
$2$-elementary lattice with the invariants $(r,a,\delta)$ is
equivalent to classifying the even negative definite lattices $\oS$
with the invariants $(r-2,a,\delta)$, $(r-2,a-2,\delta)$ and $(r-2,a-2,0)$.


\section{Coxeter diagrams and maximal parabolic subdiagrams}
\label{sec:diagrams}

\begin{proof}[Proof of Thm.~\ref{thm:table}]
  For some of these lattices the Coxeter diagrams are well known: for
  those on the $a=0$ line (they are unimodular) and for those on the
  $r=a$ line (they are twice the unimodular). The two hardest cases
  were done by Vinberg and Kaplinskaja \cite{vinberg1978the-groups}.
  
  For others we performed the computation following
  Vinberg's algorithm. Each of these lattices can be written as
  $U\oplus \Lambda$ or $U(2)\oplus \Lambda$ for some root lattice
  $\Lambda$, where $U$ is the hyperbolic plane.  We chose the control
  vector to be a vector $v_0$ in the first summand, with $v_0^2=2$ for
  $U$, resp.\ $v_0^2=4$ for $U(2)$, and ran the algorithm using a
  custom Sage~\cite{sagemath} script written for this purpose.

  We checked the completeness of the diagrams directly, by confirming
  that the cones in the Minkowski spaces $\bR^{1,r-1}$
  defined by the roots of $\Gamma_r$ lie entirely in~$\overline{\cC}$.

  For the diagrams without broken edges the easy sufficient condition
  of~\cite{vinberg1973some-arithmetic} also works to verify the
  completeness by hands: the diagrams do not
  contain Lann\'er subgraphs, and every connected parabolic subdiagram
  is contained in a maximal parabolic subdiagram of maximal rank. For
  the diagrams with broken edges, the criterion 
  of~\cite[Prop.~2]{vinberg1972-groups-of-units} works.
\end{proof}

\begin{proof}[Proof of Thm.~\ref{thm2}]
  This follows by observation, analyzing the Coxeter diagrams produced
  by the above method.
\end{proof}

\begin{proof}[Proof of Thm.~\ref{thm1}]
  For the nonreflective lattices the proof is as follows. One has
  \begin{displaymath}
    (19,3,1) = \la-2\ra\oplus (18,2,1)
    \text{ and }
    (18,4,1) = \la-2\ra\oplus (17,3,1).
  \end{displaymath}
  The fundamental polyhedron $P_r(18,2,1)$ is a face of
  $P_r(19,3,1)$ and $\Gamma_r(18,2,1)$ is smaller than
  $\Gamma_r(19,3,1)$, 
  see~\cite[Lem.~4.14]{alexeev2022mirror-symmetric}. Similarly for
  $(18,4,1)$.  The diagrams $\Gamma_r(18,2,1)$, $\Gamma_r(17,3,1)$
  are infinite by~\cite[Thm.~3.6]{alexeev2022mirror-symmetric}, and so
  are $\Gamma_r(19,3,1)$ and $\Gamma_r(18,4,1)$.

  Finally, one has $(17,5,1)\simeq U(2)\oplus\Lambda$ for
  any even $2$-elementary negative definite lattice $\Lambda$ with
  $(r,a,\delta)=(15,3,1)$, where $U$ is a hyperbolic plane. If the lattice
  $(17,5,1)$ were reflective then $\Lambda$ would correspond to a
  maximal parabolic subdiagram of $\Gamma_r$ and the root sublattice
  $R(\Lambda)$ would have rank equal to $r(\Lambda)=15$.  But
  by~\cite[Table~2]{alexeev2022mirror-symmetric} there exists $\Lambda$
  with $R(\Lambda)=A_{13}A_1(2)$ of rank~$14$.
\end{proof}

\begin{remark}\label{rem:converse}
  From the experimental data, it appears that the list of reflective
  $2$-elementary
  lattices given in Fig.~\ref{fig:reflective} may be complete. For many, but
  not all of the missing nodes the non-reflectivity can be checked by using
  the duality
  $L\to L^\dag$ and the arguments as in the proof of Theorem~\ref{thm1}.
\end{remark}

Next, we list the Coxeter diagrams of Fig.~\ref{fig:reflective} with
$r+a\ge22$ and the maximal parabolic diagrams in them.
The diagrams for the lattices $(11,11,1)$, $(12,10,1)$, $(13,9,1)$,
$(14,8,0)$ and $(14,8,1)$ are shown in Figure~\ref{fig:coxeter}.

\begin{figure}[htbp]
  \centering
  \includegraphics[width=\linewidth]{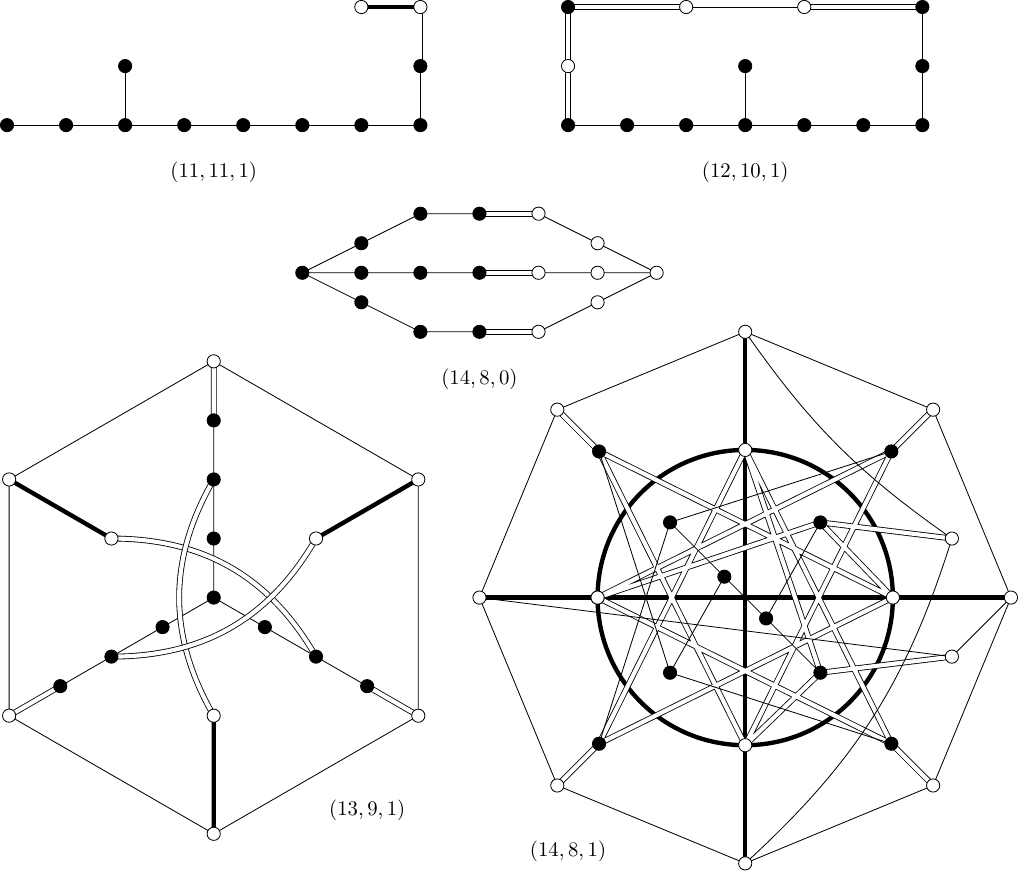}
  \caption{Coxeter diagrams $\Gamma_r$ for ranks $r\le 14$}\label{fig:coxeter}
\end{figure}

\subsection*{\boldmath$(11,11,1)$\unboldmath}\label{sec:11-11-1}

$\Gamma_r$ has $12$ roots.  There are two isotropic vectors $v$ modulo
$O^+(S)$. They correspond to the following subdiagrams of the Coxeter diagram,
both even ordinary: $\wE_8(2) \wA_1$ and $\wB_9(2)$.
The automorphism group of  $\Gamma_r$ is trivial.

\subsection*{\boldmath$(12,10,1)$\unboldmath}\label{sec:12-10-1}

$\Gamma_r$ has $14$ roots. Note that for the parabolic
subdiagram $\wC_3$ the corresponding root lattice $v^\perp/v$ is
$C_3=D_3=A_3$.
There are $5$ maximal parabolic subdiagrams listed below.  The
automorphism group of
$\Gamma_r$ is trivial.

\begin{enumerate}
\item odd: $\wE_8(2)\wC_2(2)$, $\wC_{10}(2)$.
\item even ordinary: $\wE_7(2)\wC_3$, $\wB_6(2)\wF_4(2)$, $\wB_8(2)\wC_2$.
\end{enumerate}

\subsection*{\boldmath$(13,9,1)$\unboldmath}\label{sec:13-9-1}

$\Gamma_r$ has
$19$ roots.  There are $22$ maximal parabolic subdiagrams, and $7$
modulo $\Aut\Gamma_r=S_3$:

\begin{enumerate}
\item odd: $\wC_8(2)\wC_3(2)$, $\wE_7(2)\wB_3\wA_1$, $\wC_7(2)\wF_4$.
\item even ordinary: $\wF_4^2(2)\wB_3(2)$, $\wC_6\wB_5(2)$, $\wB_6(2)\wC_4\wA_1$
  $\wE_6(2)\wA_5$.
\end{enumerate}

It is clear that this diagram is built on top of the graph $K_3^{(2)}$. The
main roots are the $6$ roots on the outside hexagon, and the additional roots
come in four layers culminating with the central vertex. The roots
generate the lattice, so the $S_3$-action on $\Gamma_r$ extends to the
action on the lattice $H$ itself.

\subsection*{\boldmath$(14,8,0)$\unboldmath}\label{sec:14-8-0}
$\Gamma_r$ has $17$ roots. There are $11$ maximal parabolic
subdiagrams, and $5$ modulo $\Aut\Gamma_r=S_3$:

\begin{enumerate}
\item odd: $\wE_7(2)\wB_5$, $\wC_8(2)\wF_4$.
\item even ordinary: $\wF_4^3(2)$, $\wC_6\wB_6(2)$, $\wE_6\wE_6(2)$.  
\end{enumerate}

It is clear that this diagram is built on top of the graph
$D_4^{(2)}$, where $D_4$ is a tree with four vertices, the central
vertex of degree $3$ and three ends.

\subsection*{\boldmath$(14,8,1)$\unboldmath}\label{sec:14-8-1}

$\Gamma_r$ has $24$
roots. There are $127$ maximal parabolic subdiagrams, and $15$ modulo
$\Aut\Gamma_r=S_4$:

\begin{enumerate}
\item odd: $\wF_4^2\wC_4(2)$, $\wB_6\wC_6(2)$,
  $\wC_6(2)\wB_4\wC_2(2)$, $\wE_6(2)\wA_5\wA_1$, $\wD_6(2)\wB_3^2$,
  $\wC_4^3(2)$.
\item even ord: $\wC_6\wF_4(2)\wC_2$, $\wC_4^2\wB_4(2)$,
  $\wD_6\wB_5(2)\wA_1$, $\wE_7\wB_5(2)$, $\wC_8\wB_4(2)$,
  $\wA_7\wD_5(2)$.
\item even char: $\wF_4^3(2)$, $\wC_6\wB_6(2)$, $\wE_6\wE_6(2)$.
\end{enumerate}

\smallskip

It is clear that this diagram is built on top of the graph
$K_4^{(2)}$. Denote the main roots $\alpha^m_i$ with $i=1,2,3,4$ for
the $4$ vertices and $\alpha^m_{ij}$ with $1\le i<j\le 4$ for the $6$
edges of $K_4$.
For each edge there is an
additional $(-2)$-root $\alpha_{ij}$ attached to it, and a $(-4)$-root
$\alpha_{ij}^+$ attached to $\alpha_{ij}$. One has
$\alpha_{ij}^m\cdot\alpha_{ij} = \alpha_{ij}\cdot\alpha^+_{ij}=2$, 
$\alpha_{ij}^m\cdot\alpha^+_{ij} = 0$.
Let $\bar\alpha_{ij} = \alpha_{ij} + \alpha^+_{ij}$.

Then the $12$ roots $\alpha_{ij}$, $\bar\alpha_{ij}$ are mutually
orthogonal, and are also orthogonal to the central $(-4)$-root
$\alpha_{cen}$. The orthogonal complement of these $13$ vectors in $H$
is $\bZ h$ with
$h= \tfrac12(\sum \alpha_i^m + \sum\alpha_{ij} + \sum\bar\alpha_{ij})$.
One has $h^2=4$. Set $e=\tfrac12(h+\alpha_{cen})$ and
$f=\tfrac12(h-\alpha_{cen})$. Then the $14$ vectors $\alpha_{ij}$,
$\bar\alpha_{ij}$, $e$, $f$ form the standard basis of a sublattice of
$H$ isomorphic to 
$(I_{0,12}\oplus U)(2)$. Thus, $H$ can be identified with an
$S_4$-invariant lattice lying between it and its dual
$(I_{0,12}\oplus U)(\tfrac12)$.

Picking $v_0=h$ to be the control vector, Vinberg's algorithm ends in
two steps. 

\subsection*{\boldmath$(15,7,1)$\unboldmath}\label{sec:15-7-1}

$\Gamma_r$ contains $60$ roots.  As in
Definition~\ref{def:common-graph}, it is build on top of the
graph $K_5^{(2)}$. The subdiagram of the main roots is $K_5^{(2)}$, as shown
on the left in Fig.~\ref{fig:15roots}. The $15$ main roots form a
basis of the lattice.

\begin{figure}[htbp]
  \centering
  \includegraphics[width=\textwidth]{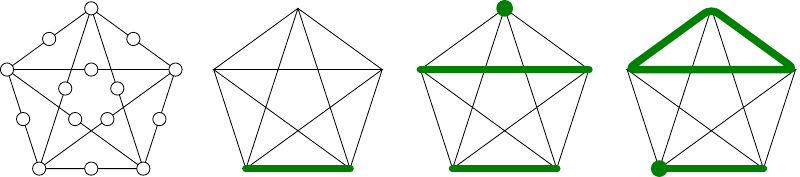}
  \caption{Additional roots for $(15,7,1)$}\label{fig:15roots}
\end{figure}

Recall Definition~\ref{def:nth-graph} of the graph $G^{(2)}$. The
vertices of $G^{(2)}$ are naturally in a bijection with the set of
vertices + edges of $G$. Using this bijection, we identify the
additional roots $\alpha_C$ (see
Definition~\ref{def:common-graph}) with certain collections $C$ of
vertices and edges of $G$. We use this notation for the other graphs
below as well.

For $G=K_5$ these collections $C$ of vertices and edges are
pictured in Fig.~\ref{fig:15roots}:

\begin{enumerate}
\item $10$ $(-4)$-roots with $C$ an edge, e.g.\ $C=\{12\}$.
\item $15$ $(-2)$-roots with $C$ a vertex and two edges, all disjoint
  from each other, e.g.\ $C=\{1,23,45\}$.
\item $20$ $(-2)$-roots with $C=\{1,12,34,35,45\}$ etc, with $C$
  consisting of three edges in a triangle, an edge disjoint from it,
  plus a vertex on the latter edge.
\end{enumerate}

The edges between the main and additional roots are specified in
Definition~\ref{def:common-graph}. We now list the typical edges
between the additional roots $\alpha_C$, $\alpha_{C'}$. The others
follow by $S_5$-symmetry.

\begin{enumerate}

\item $\alpha_{12}$ is connected by a single line to $\alpha_{34}$; by
  a double line to $\alpha_{3,14,25}$; and by a broken line to
  $\alpha_{3,13,24,25,45}$.

\item $\alpha_{1,23,45}$ 
  is connected by a double line to
  $\alpha_{24}$;
  a bold line to $\alpha_{1,24,35}$, $\alpha_{2,13,45}$,
  $\alpha_{1,12,34,35,45}$ and $\alpha_{2,14,15,23,45}$;
  and a broken line to $\alpha_{2,14,35}$,
  $\alpha_{2,12,34,35,45}$ and $\alpha_{2,13,14,25,34}$.
  
\item $\alpha_{1,12,34,35,45}$
  a bold line to $\alpha_{1,23,45}$ and $\alpha_{3,12,45}$;
  and a broken line to
  $\alpha_{23}$,
  $\alpha_{2,13,45}$, $\alpha_{3,14,25}$,
  $\alpha_{1,13,24,25,45}$,
  $\alpha_{2,12,34,35,45}$, 
  $\alpha_{3,12,15,25,34}$, 
  $\alpha_{3,14,15,23,45}$, \linebreak
  $\alpha_{2,14,15,23,45}$, and
  $\alpha_{3,13,24,25,45}$. 
\end{enumerate}

The entire intersection matrix can be easily described as follows. The
main roots $\alpha^m_1$, $\alpha^m_{12}$ etc.\ form a basis of the
lattice $S$.
Let $\omega_1$ and $\omega_{12}$ etc.\ denote the vectors
such that $\alpha^m_I\cdot \omega_J = 2\delta_{IJ}$, i.e.\ forming twice the
dual basis. Then $\alpha_C = \sum_{I\in C}\omega_I$, where $I$ is one
of the sets $1, \dotsc, 5, 12, \dotsc, 45$.  Thus, it suffices to specify the
typical intersection numbers between $\omega_I$. They are:
\begin{displaymath}
  \omega_1^2 = -6, \ \omega_1\cdot\omega_2=2,\
  \omega_1\cdot\omega_{12}=-2,\ \omega_1\cdot\omega_{23}=2,\
  \omega_{12}^2=-4,\ \omega_{12}\cdot\omega_{23}=0,\ \omega_{12}\cdot\omega_{34}=2.
\end{displaymath}

The $(-4)$-vectors of $\Gamma_r$ form the
Petersen graph on $10$ vertices, with simple edges.

\smallskip There are $1027$ maximal parabolic subdiagrams, and $20$
modulo $\Aut\Gamma_r=S_5$:

\begin{enumerate}
\item odd: $\wE_7\wC_6(2)$, $\wB_6\wF_4\wC_3(2)$, $\wB_6\wC_6(2)\wA_1$, 
  $\wD_6\wC_5(2)\wC_2(2)$, $\wB_8\wC_5(2)$, $\wA_5\wA_5(2)\wB_3$, 
  $\wF_4^3\wA_1$, $\wC_4(2)\wB_4^2\wA_1$, 
  $\wE_6\wE_6(2)\wA_1$, $\wA_7\wD_5(2)\wA_1$, $\wD_4\wC_3^3(2)$.
\item even ordinary: $\wE_7\wB_5(2)\wA_1$, $\wD_8\wB_4(2)\wA_1$, 
  $\wE_8\wB_5(2)$, $\wC_8\wF_4(2)\wA_1$, $\wE_7\wF_4(2)\wC_2$, \linebreak
  $\wD_6\wC_4\wB_3(2)$, $\wA_9\wA_4(2)$, $\wC_6^2\wA_1$, $\wC_{10}\wB_3(2)$.
\end{enumerate}

\subsection*{\boldmath$(16,6,1)$\unboldmath}\label{sec:16-6-1}

$\Gamma_r$ contains $118$ roots. The starting graph $G$ is
$K_6$, a complete graph on $6$ vertices. As in
Definition~\ref{def:common-graph}, the subdiagram of the main roots is
$K_6^{(2)}$.
It has $6$ $(-2)$-roots corresponding to vertices of
$K_6$---call them $1$, $2$, $3$, $4$, $5$, $6$---and $15$ $(-2)$-roots
corresponding to the edges of $K_6$---call them $12$, $13$ etc, for a
total of $21$ main roots. Let us denote the main roots $\alpha^m_1$,
$\alpha^m_{12}$ etc.

\begin{figure}[htbp]
  \centering
  \includegraphics[width=.85\linewidth]{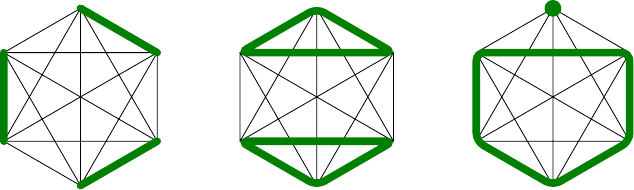}
  \caption{Additional roots for $(16,6,1)$}\label{fig:16roots}
\end{figure}
The additional roots $\alpha_C$ correspond to the following
collections of vertices and edges of the starting graph $G$, pictured
in Fig.~\ref{fig:16roots}, modulo $S_6$:

\begin{enumerate}
\item $15$ $(-4$)-roots $\alpha_{12,34,56}$ for $C$ equal to a triple of
  disjoint edges. 
\item $10$ $(-4)$-roots $\alpha_{12,23,31,45,56,64}$ for $C$ equal to
  the $6$ edges in two disjoint triangles.
\item $72$ $(-2)$-roots   $\alpha_{12,23,34,45,51,6}$ for 
  a cycle of $5$ edges and a vertex.
\end{enumerate}

The $15$ main roots $\alpha_{12}$ for the edges, together with
$\alpha^m=\alpha^m_1 + \alpha^m_2 + \dotsb + \alpha^m_6$ span the lattice over
$\bQ$. Let $\omega_{12},\dotsc,\omega_{56}, \omega$ be twice the dual
basis. Then the additional roots are $\alpha_C = \sum_{I\in C}
\omega_I$, where we formally set $\omega_1 = \omega$ etc for the
vertices. The multiplication table for this dual basis is very easy:
$\omega_{12}^2 = -\frac{16}{9}$ for the edges, and all the other
products and squares are $\frac29$. Thus,
\begin{displaymath}
  \omega_C\cdot\omega_{C'} = \frac29|C|\cdot|C'|
  - 2|C\cap C'\cap{\rm Edges}(G)|.
\end{displaymath}
Explicitly, the edges between the additional roots are:
\begin{enumerate}
\item Two roots of type (1) are joined by a bold line if they don't
  share an edge.
\item Two roots of type (2) are joined by a bold line.
\item Two roots of type (3) are joined by a bold line if they share
  $3$ edges, and by a broken line otherwise.
\item Roots of types (1) and (2) are joined by a bold line if they
  don't share an edge, by a single line if they share one edge.
\item Roots of types (1) and (3) are joined by a broken line if they
  don't share edges, and by a double line if they share one edge.
\item Roots of types (2) and (3) are joined by a double line if they
  share three edges, and by a broken line if they share fewer edges.
\end{enumerate}

\smallskip
The $15$ additional roots $\alpha_{12}$ etc.\ for the edges are
mutually orthogonal $(-2)$-roots. Denote by
$v=\alpha^m_1 + \dotsb + \alpha^m_6$ the sum of the main roots for the
vertices of $K_6$ and by $e = \alpha^m_{12} + \dotsb + \alpha^m_{56}$
the sum of the main roots for the edges of $K_6$. Then
$h=\frac13(v+e)$ is a vector in $H$ satisfying $h^2=2$ and orthogonal
to the above $15$ roots. Then $h, \alpha_{12}, \dotsc, \alpha_{56}$
form a sublattice of $H$ isomorphic to $I_{1,15}(2)$. Thus, $H$ can be
identified with an $S_6$-symmetric lattice lying between it and its dual
$I_{1,15}(\tfrac12)$. 

With the control vector $v_0=h$, Vinberg's algorithm terminates in
four steps. 

\medskip

There are $8917$ maximal parabolic subdiagrams, 
$28$ modulo $\Aut\Gamma_r = S_6$:
\begin{enumerate}
\item odd: $\wB_6\wC_4(2)\wB_4$, $\wB_{10}\wC_4(2)$,
  $\wB_6^2\wC_2(2)$, $\wE_8\wC_6(2)$, $\wD_6\wB_4\wC_3(2)\wA_1$,
  $\wD_8\wC_4(2)\wC_2(2)$, $\wE_6\wA_5(2)\wB_3$, $\wB_8\wF_4\wC_2(2)$,
  $\wB_8\wC_6(2)$, $\wB_6\wF_4^2$, $\wE_7\wF_4\wC_3(2)$,
  $\wE_7\wC_5(2)\wC_2(2)$, \linebreak
  $\wD_6\wC_3^2(2)\wC_2(2)$,
  $\wD_4^2\wC_2^3(2)$, $\wA_5^2\wA_2^2(2)$,
  $\wA_7\wA_3(2)\wB_3\wA_1(2)$, $\wB_4^3\wC_2(2)$,
  $\wA_9\wA_4(2)\wA_1$.
\item even ordinary: $\wE_8\wF_4(2)\wC_2$, $\wD_{10}\wB_3(2)\wA_1$,
  $\wC_{10}\wF_4(2)$, $\wE_7\wC_4\wB_3(2)$, $\wC_{12}\wC_2$,
  $\wD_8\wC_4\wC_2$, $\wA_{11}\wA_2(2)\wA_1(2)$, $\wE_7\wC_6\wA_1$,
  $\wC_8\wC_6$, $\wD_6^2\wC_2$.
\end{enumerate}

Prof. Shigeru Mukai has informed me that this Coxeter diagram can also
be obtained as a face of the fundamental polyhedron for the
lattice $(A_3 \oplus A_1^6)^\perp$ in $II_{1,25}$ which was
computed by Kond\=o \cite{kondo1998the-automorphism-group} using the 
Borcherds method \cite{borcherds1987automorphism-groups}.

\subsection*{\boldmath$(18,4,0)$\unboldmath}\label{sec:18-4-0}

The diagram $\Gamma_r$ was described by Keum and Kond\=o
\cite{keum2001the-automorphism-group} using the method of
\cite{borcherds1987automorphism-groups} and the Leech lattice.  Our
description is quite different.  The Borcherds method does not always
produce the Coxeter diagram of a reflexive lattice (e.g. it does not
in the $(16,6,1)$ case). So the equivalence between our diagram and
Keum-Kond\=o's is not immediate. But it can be checked directly.

\begin{figure}[htbp]
  \centering
  \includegraphics{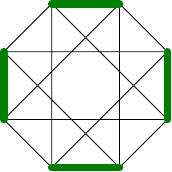}
  \caption{Additional roots for $(18,4,0)$}\label{fig:18roots}
\end{figure}

The Coxeter diagram $\Gamma_r$ contains $48$ roots. The starting graph is
$G=K_{4,4}$, the complete bipartite graph on $8$ vertices, shown in
Figure~\ref{fig:18roots}. 
As in Definition~\ref{def:common-graph}, the subdiagram of the main
roots is $K^{(2)}_{4,4}$. It has $8$ $(-2)$-roots corresponding to the
vertices of $K_{4,4}$ and $16$ $(-2)$-roots corresponding to the edges
of $K_{4,4}$, for a total of $24$ main roots.

The additional $24$ $(-4)$-roots $\alpha_C$ are in bijection with the
sets of four disjoint edges in $G$, i.e.\ with the perfect matchings on
the set of vertices of $G$. The intersection numbers are
\begin{math}
  \alpha_C\cdot\alpha_{C'} = 4 - 2|C\cap C'|.
\end{math}
Thus, $\alpha_C$ and $\alpha_{C'}$ are connected by a bold line if
they don't share any edges, and by a single line if they share exactly
one edge.
Via a natural bijection between the perfect matchings in
$K_{4,4}$ and elements of the group $S_4$, 
$C\cap C'$ is the set of fixed points of $C\inv\circ C'\in S_4$.

\smallskip

The graph $K_{4,4}$ is bipartite, its vertices are split into two
groups of four.  Denote by $v^{(1)}$, resp.\ $v^{(2)}$, the sum of the
main roots in the first, resp.\ the second group. Denote by $e$ the sum
of the main roots for the $16$ edges. Then $a=\tfrac14(2v^{(1)} + e)$
and $b=\tfrac14(2v^{(2)} + e)$ are two vectors in $H$ satisfying
$a^2=b^2=0$, $a\cdot b=4$. They are also orthogonal to the $16$ main
$(-2)$-roots $\alpha^m_{ij}$ for the edges, which are mutually
orthogonal as well. Together they form a standard basis of the
sublattice $(I_{0,16}\oplus U)(2)$ in $H$. Thus, $H$ can be identified
with an $(\Aut K_{4,4})$-invariant lattice lying between it and its dual
$(I_{0,16}\oplus U)(\tfrac12)$.

With the control vector $v_0=a+b$, Vinberg's algorithm terminates in
two steps.

\medskip

The automorphism group of the diagram is $\Aut\Gamma_r = \Aut K_{4,4}
= S_2 \ltimes (S_4\times S_4)$. 
There are $5244$ maximal parabolic subgraphs, $17$
modulo $\Aut\Gamma_r$:
\begin{enumerate}
\item odd: $\wD_8\wB_4^2$, $\wE_8\wF_4^2$, $\wE_7\wB_6\wC_3(2)$,
  $\wA_{11}\wB_3\wA_2(2)$, $\wE_6^2\wA_2^2(2)$, $\wB_{12}\wF_4$,
  $\wD_6^2\wC_2^2(2)$, $\wD_{10}\wC_3^2(2)$, $\wB_8^2$,
  $\wA_7^2\wA_1^2(2)$, $\wD_4^4$.
\item even ordinary: $\wA_{15}\wA_1(2)$, $\wE_7^2\wC_2$, $\wD_{12}\wC_4$,
  $\wC_8\wE_8$, $\wC_{16}$, $\wD_8^2$.
\end{enumerate}

\subsection*{\boldmath$(20,2,1)$\unboldmath}\label{sec:20-2-1}

The Coxeter diagram was computed by Vinberg and Kaplinskaja 
in~\cite{vinberg1978the-groups}, see also~\cite{vinberg1983two-most}.
For easier comparison with the above diagrams we list the
roots in Figure~\ref{fig:20roots}.  There are $25$ main
roots for the vertices and edges of the Petersen's graph
and $25$ additional roots: $20$ $(-2)$-roots of the first kind and $5$
$(-4)$-roots of the second kind.

\begin{figure}[htbp]
  \centering
  \includegraphics[width=.6\linewidth]{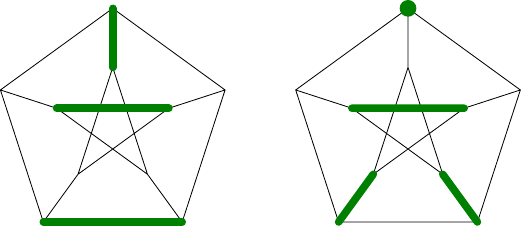}
  \caption{Additional roots for $(20,2,1)$}\label{fig:20roots}
\end{figure}

There are $581$ maximal parabolic subdiagrams, $13$ modulo
$\Aut\Gamma_r=S_5$, all odd: $\wB_{18}$, $\wB_{10}\wE_8$,
$\wD_{16} \wC_2(2)$, $\wD_{10}\wE_7\wA_1$, $\wE_8^2\wC_2(2)$,
$\wE_7^2\wB_4$, $\wD_{12}\wB_6$, $\wD_8^2\wC_2(2)$, $\wA_9^2$,
$\wA_{15}\wB_3$, $\wA_{11}\wE_6\wA_1(2)$, $\wA_{17}\wA_1$, $\wD_6^3$.
On the unique K3 surface with this Picard lattice they define $13$
types of elliptic pencils with a section. The rules of
Fig.~\ref{fig:extended} lead to a description of the Kodaira types of
the singular fibers in these pencils.

\subsection*{\boldmath$(13,11,1)$\unboldmath}\label{sec:13-11-1}
$\Gamma_r$ has $16$ roots. There are $9$ maximal parabolic
subdiagrams, $5$ modulo $\Aut\Gamma_r=S_2$:
\begin{enumerate}
\item odd: $\wC_{11}(2)$, $\wE_8(2) \wC_3(2)$, 
\item even ord: $\wB_8(2) \wB_3(2)$, $\wB_7(2) \wF_4(2)$, $\wE_7(2)
  \wC_3 \wA_1$. 
\end{enumerate}

\subsection*{\boldmath$(14,10,1)$\unboldmath}\label{sec:14-10-1}
$\Gamma_r$ has $21$ roots, There are $69$ maximal
parabolic subdiagrams, $12$ modulo $\Aut\Gamma_r = D_6$:
\begin{enumerate}
\item odd: $\wC_8(2) \wF_4$, $\wE_7(2) \wB_3 \wC_2(2)$, $\wC_8(2)
  \wC_4(2)$, $\wD_8(2) \wC_4(2)$, 
\item even ord: $\wB_4(2) \wB_4(2) \wB_4(2)$, $\wC_6 \wB_6(2)$,
  $\wB_6(2) \wC_4 \wC_2$, $\wB_4(2) \wF_4(2) \wF_4(2)$,
  \linebreak
  $\wD_6(2)
  \wC_3 \wC_3$, $\wE_6(2) \wA_5 \wA_1$,
\item even char: $\wE_7(2) \wC_5$, $\wB_8(2) \wF_4(2)$.
\end{enumerate}

\subsection*{\boldmath$(15,9,1)$\unboldmath}\label{sec:15-9-1}

$\Gamma_r$ has $98$ roots which modulo $\Aut\Gamma_r=\PGL(2,7)$ of
order $336$ split into four orbits of sizes $14$, $14$, $14$,
$56$. The first group of $14$ forms the subdiagram $G$ of the main
roots. Here, $G$ is a $4$-regular bipartite graph on $14$ vertices. It
has a natural embedding into the complete bipartite graph
$K_{7,7}$. $G$ is the co-Heawood graph; its edges
in $K_{7,7}$ are complementary to the
edges of the well known Heawood graph, a $3$-regular graph on $14$
vertices. One way to describe $G$ is to say that its vertices
correspond to the points and lines in the Fano plane $\bP^2$ over
$\bF_2$, and that a line and a point are joined by an edge iff they
are \emph{not} incident.

The remaining roots are additional. They are connected to the main
roots by $1$, $4$ and $6$ edges.  The first of these groups is formed
by the $(-4)$-roots. This subdiagram is isomorphic to the Heawood
graph.  Each of these roots is connected to exactly one main root and
$\alpha_i^m\alpha_i = 4$.  The remaining two orbits are formed by the
$(-2)$-roots of
divisibility~$2$. By~\cite[Lem.~4.14]{alexeev2022mirror-symmetric}
removing one of these roots and its $(-2)$-neighbors gives the Coxeter
diagrams for the lattices $(14,8,0)$ and $(14,8,1)$.

There are $2114$ maximal parabolic subdiagrams, and $20$ modulo
$\Aut\Gamma_r=\Aut G$:
\begin{enumerate}
\item odd: $\wD_6(2)\wB_3^2\wA_1$, $\wA_7(2)\wA_3\wC_3(2)$,
  $\wE_6(2)\wA_5\wC_2(2)$, $\wC_6(2)\wB_4\wC_3(2)$, $\wC_4^3(2)\wA_1$,
  \linebreak
  $\wE_7(2)\wB_5\wA_1$, $\wC_8(2)\wF_4\wA_1$, $\wC_7(2)\wB_6$,
  $\wC_5(2)\wF_4^2$.
\item even ord: $\wA_7\wD_5(2)\wA_1$, $\wE_6\wE_6(2)\wA_1$,
  $\wC_4^2\wB_4(2)\wA_1$, $\wD_4\wB_3^3(2)$, $\wF_4^3(2)\wA_1$,
  $\wC_8\wB_5(2)$, $\wA_5\wA_5(2)\wC_3$, $\wE_7\wB_6(2)$,
  $\wC_6\wF_4(2)\wB_3(2)$, $\wD_6\wB_5(2)\wC_2$, $\wC_6\wB_6(2)\wA_1$. 
\end{enumerate}

Note that
$(15,9,1) \simeq \la -2\ra \oplus (14,8,1) \simeq \la -2\ra \oplus
(14,8,0)$.  Among these two, $(14,8,1) = S_X$ for some K3 surface with
a nonsymplectic involution, and $(15,9,1)$ is the Picard lattice of
the hyperkahler manifold $\Hilb^2(X)$.
The symmetry group of the Coxeter diagram $\Gamma_r(14,8,1)$ is only $S_4$,
so in this case $\Hilb^2(X)$ is much more symmetric than the K3
surface $X$.

\subsection*{\boldmath$(18,6,0)$\unboldmath}\label{sec:18-6-0}

The Coxeter diagram $\Gamma_r$ was described by Borcherds
\cite{borcherds2000reflection-groups} and it consists of $64+896$
roots.
I computed the maximal parabolic subdiagrams of $\Gamma_r$. There are
$90897634$ of them, $28$ modulo the group $\Aut\Gamma_r$ of
order $2^{16} \cdot 3^2 \cdot 5 \cdot 7$:
\begin{enumerate}
\item odd:
$\wB_8 \wF_4^2$,
$\wE_7 \wC_5(2) \wF_4$,
$\wC_8(2) \wE_8$,
$\wD_8 \wC_4(2)^2$,
$\wB_6^2 \wC_4(2)$,
$\wE_6 \wA_5(2) \wB_5$,
$\wB_4^4$,
$\wB_{10} \wC_6(2)$,
$\wD_6 \wB_4 \wC_3(2)^2$,
$\wD_9 \wA_7(2)$,
$\wA_9 \wA_4(2) \wC_3(2)$,
$\wA_5^2 \wC_2(2) \wA_2(2)^2$,
$\wD_5^2 \wA_3(2)^2$,
$\wA_7 \wA_3(2) \wB_3^2$,
$\wD_4^2 \wC_2(2)^4$,
$\wA_3^4 \wA_1(2)^4$,
$\wA_1^{16}$.
\item even ord:
$\wE_7 \wC_6 \wB_3(2)$,
$\wD_{10} \wB_3(2)^2$,
$\wC_{12} \wF_4(2)$,
$\wE_8 \wF_4(2)^2$,
$\wD_8 \wC_4^2$,
$\wD_6^2 \wC_2^2$,
$\wC_8^2$,
$\wA_{11} \wC_3 \wA_2(2)$,
$\wE_6^2 \wA_2(2)^2$,
$\wA_7^2 \wA_1(2)^2$,
$\wD_4^4$.
\end{enumerate}

\subsection*{\boldmath$(22,2,0)$\unboldmath}\label{sec:22-2-0}

The Coxeter diagram
$\Gamma_r$ was described by
Borcherds \cite{borcherds1987automorphism-groups} and by Dolgachev-Kond\=o
\cite{dolgachev2003a-supersingular-k3}.  It consists of $42$ main
$(-2)$-roots corresponding to the $21$ points and $21$ lines of
$\bP^2(\bF_4)$, and $168$ additional $(-4)$-roots. 
$\Aut\Gamma_r = \PSL(3,\bF_4).D_{6}$ has order $241920$.  By Esselmann
\cite{esselmann1996uber-die-maximale} this lattice has the maximal
possible rank for a reflective hyperbolic lattice.

I computed the maximal parabolic subdiagrams of $\Gamma_r$. There are
$1095990$ of them, $18$ modulo $\Aut\Gamma_r$, all odd:
$\wE_7^2 \wB_6$, $\wE_8^2 \wF_4$, $\wA_7^2 \wD_5
\wA_1(2)$, $\wD_{12} \wB_8$, $\wB_{20}$, $\wB_{12} \wE_8$, $\wD_6^3
\wC_2(2)$, $\wD_{16} \wF_4$, $\wD_{10} \wE_7
\wC_3(2)$, $\wD_8^2 \wB_4$, $\wA_{15} \wB_5$, $\wA_9^2
\wC_2(2)$, $\wA_{11} \wE_6 \wB_3$, $\wA_{11} \wD_7 \wA_2(2)$,
$\wA_{17} \wC_3(2)$, $\wE_6^3 \wA_2(2)$, $\wA_5^4$, $\wD_4^5$.

\subsection*{\boldmath$(14,12,1)$\unboldmath}\label{sec:14-12-1}

$\Gamma_r$ has $17$ roots. There are $16$ maximal parabolic
subdiagrams, $9$ modulo $\Aut\Gamma_r=S_2$:
\begin{enumerate}
\item odd: $\wE_8(2) \wC_4(2)$, $\wC_{12}(2)$, $\wD_{12}(2)$, 
\item even ord: $\wD_8(2) \wB_4(2)$, $\wB_8(2) \wB_4(2)$, $\wB_8(2)
  \wF_4(2)$, $\wE_7(2) \wC_3 \wC_2$,
\item even char: $\wE_8(2) \wF_4(2)$, $\wB_{12}(2)$.
\end{enumerate}

\subsection*{\boldmath$(15,11,1)$\unboldmath}\label{sec:15-11-1}

There are $66$ roots which modulo
$\Aut\Gamma_r = S_2\ltimes(S_3\times S_3)$ split into five  orbits.
The subdiagram of the main roots is the graph $L(K_{3,3})$, the line
graph of $K_{3,3}$, a $4$-regular on $9$ vertices.
The other roots are additional.

The $(-4)$-roots are split into two groups of sizes $6$, and $9$,
connected to the main roots by $0$, resp.\ $2$ edges.  Together, they
form the graph $K_{3,3}^{(2)}$.  There are also two orbits of sizes
$6$ and $36$ connected to the main roots by $3$, resp.\ $5$ edges,
latter with multiplicities $2,2,2,4,4$.  These are $(-2)$-roots of
divisibility $2$.  By~\cite[Lem.~4.14]{alexeev2022mirror-symmetric}
removing one of these roots and its $(-2)$-neighbors gives the Coxeter
diagrams for the lattices $(14,10,0)$ and $(14,10,1)$.

There are $522$ maximal parabolic subdiagrams, $16$ modulo
$\Aut\Gamma_r=\Aut G$:
\begin{enumerate}
\item odd: $\wA_{11}(2)\wA_2$, $\wE_7(2)\wC_3(2)\wB_3$,
  $\wC_{12}(2)\wA_1$, $\wE_8(2)\wF_4\wA_1$, $\wC_8(2)\wC_5(2)$,
  $\wC_9(2)\wF_4$, $\wD_8(2)\wC_4(2)\wA_1$.
\item even ord: $\wE_6(2)\wA_5\wC_2$, $\wB_6(2)\wC_4\wB_3(2)$,
  $\wA_7(2)\wB_3(2)\wA_3$, $\wB_7(2)\wC_6$, $\wB_4^3(2)\wA_1$,
  $\wD_6(2)\wC_3^2\wA_1$, $\wB_8(2)\wF_4(2)\wA_1$,
  $\wE_7(2)\wC_5\wA_1$, $\wB_5(2)\wF_4^2(2)$.
\end{enumerate}

\subsection*{\boldmath$(15,13,1)$\unboldmath}\label{sec:15-13-1}

$\Gamma_r$ has $27$ roots. There are $46$ maximal parabolic
subdiagrams, $10$ modulo $\Aut\Gamma_r=S_3$:
\begin{enumerate}
\item odd: $\wE_8(2) \wC_5(2)$, $\wD_{12}(2) \wA_1$, $\wC_{13}(2)$,
\item even ord: $\wE_7(2) \wC_3 \wB_3(2)$, $\wB_8(2) \wB_5(2)$,
  $\wA_{11}(2) \wA_2$, $\wB_{12}(2) \wA_1$, $\wB_9(2) \wF_4(2)$,
  $\wD_8(2) \wB_4(2) \wA_1$, $\wE_8(2) \wF_4(2) \wA_1$. 
\end{enumerate}

\subsection*{\boldmath$(16,12,1)$\unboldmath}\label{sec:16-12-1}

There are $118$ roots which modulo $\Aut\Gamma_r = S_6$ split into
five orbits of sizes $15,15,6,10,72$ of vectors with square
$-2,-4,-4,-2,-4$. There are $15$ main roots forming $L(K_6)$, the line
graph of the complete graph $K_6$. If $L=(16,6,1)$ then $(16,12,1)$ is
the even sublattice of $L^*(2)$. This implies that the maximal
parabolic diagrams of the $(16,12,1)$ lattice are in a bijection with
those of the $(16,6,1)$ lattice, with the $(-2)$ and $(-4)$-roots
interchanged, and odd and even ordinary diagrams interchanged.

\subsection*{\boldmath$(16,14,1)$\unboldmath}\label{sec:16-14-1}

There are $29$ roots. This diagram is dual to that of $(16,4,1)$, with
the $(-2)$ and $(-4)$ roots switched.
There are $115$ maximal parabolic
subdiagrams, $14$ modulo $\Aut\Gamma_r=D_6$:

\begin{enumerate}
\item odd: $\wE_8(2) \wC_6(2)$, $\wC_{14}(2)$, $\wD_{12}(2) \wC_2(2)$,
  $\wE_7(2) \wE_7(2)$, 
\item even ord: $\wE_8(2) \wF_4(2) \wC_2$, $\wD_8(2) \wB_4(2) \wC_2$,
  $\wB_{12}(2) \wC_2$, $\wD_{10}(2) \wC_3 \wA_1(2)$,
\newline
  $\wE_7(2)
  \wB_4(2) \wC_3$, $\wE_7(2) \wB_6(2) \wA_1(2)$, $\wB_8(2) \wB_6(2)$,
  $\wB_{10}(2) \wF_4(2)$, $\wA_{11}(2) \wA_2 \wA_1$, $\wD_6(2) \wD_6(2) \wC_2$.
\end{enumerate}

  


\subsection*{Other coeven lattices}

If $S=(r,a,0)$ is an even coeven lattice then so is $S^\dag = S^*(2) =
(r,r-a,0)$. Their Coxeter diagrams are dual, with $(-2)$ and
$(-4)$-roots interchanged, and there is a bijection between the
maximal parabolic subdiagrams. The remaining lattices in
Fig.~\ref{fig:reflective} are obtained from the ones already listed
here or in \cite{alexeev2022mirror-symmetric}. This applies to the
following lattices:
\begin{displaymath}
  (14,10,0), (14,12,0), (18,12,0), (18,14,0), (18,16,0), (22,20,0).
\end{displaymath}

\section{Applications to infinite automorphism groups of K3 surfaces}
\label{sec:automorphisms}

The main application of the Coxeter diagrams
in~\cite{vinberg1983two-most} was the description, for the very first
time, of two infinite automorphism groups of K3 surfaces. In the same
way our computations imply such a description in several more cases.

A K3 surface $X$ whose Picard lattice $S_X$ is $2$-elementary comes
with a canonical nonsymplectic involution $\iota$ on $X$, acting as
$+1$ on $S$ and as $-1$ on the lattice of transcendental cycles
$T=S^\perp$ in $\lk$. Any automorphism of $X$ commutes with
$\iota$. Let $Y=X/\iota$ be the quotient surface. It is a rational
surface unless $S=(10,0,0)$ in which case $Y$ is an Enriques
surface. Let $B$ be the branch divisor of $\pi\colon X\to Y$ and
denote by $\Aut'(Y,B)$ the subgroup of $\Aut(Y,B)$ acting trivially on
$\Pic Y$.

\begin{lemma}
  Let $S$ be a $2$-elementary K3 lattice $S\ne (10,10,0)$, 
  $\Gamma_r$ the Coxeter diagram of $S$, and let $X$ be a K3
  surface with Picard lattice $S_X=S$. Then there exists an exact
  sequence
  \begin{displaymath}
    0\to \la\iota\ra \times \Aut'(Y,B)
    \to \Aut X \to \Sym\Gamma_r \ltimes W(\Gamma_4) \to 0,
  \end{displaymath}
  where $\Gamma_4\subset\Gamma_r$ is the subdiagram formed by the
  $(-4)$-roots, and $W(\Gamma_4)$ is the corresponding Coxeter group.
\end{lemma}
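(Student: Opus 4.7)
The plan is to analyze the natural representation $\rho\colon\Aut X\to O(S_X)$ on the Picard lattice, by identifying the kernel and the image separately.

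First I would check that $\iota$ is central in $\Aut X$. For $g\in\Aut X$, the conjugate $g\iota g^{-1}$ is a nonsymplectic involution of $X$ acting as $+1$ on $S_X$ and $-1$ on $T_X$ (because $g$ preserves the orthogonal decomposition $S_X\oplus T_X\subset L_{\rm K3}$), and the unique such involution on a K3 with $2$-elementary Picard lattice is $\iota$ itself. Centrality of $\iota$ gives a surjection $\Aut X\twoheadrightarrow\Aut(Y,B)$ with kernel $\langle\iota\rangle$. Under the identification $\pi^*\colon\Pic Y\otimes\bQ\xrightarrow{\sim} S_X\otimes\bQ$, an automorphism of $X$ acts trivially on $S_X$ exactly when its descent to $(Y,B)$ acts trivially on $\Pic Y$. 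Pulling back the subgroup $\Aut'(Y,B)$ therefore gives the kernel of $\rho$, and this pullback splits as the direct product $\langle\iota\rangle\times\Aut'(Y,B)$ because $\iota$ is central.

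Next I would analyze the image. Since $\Aut X$ preserves the ample cone, $\rho(\Aut X)$ stabilizes the nef polyhedron $P_2\subset\cH$ inside $O^+(S_X)$. Using the fact recalled in the introduction that $P_2$ is the union of $W(\Gamma_4)$-translates of $P_r$, I would show the stabilizer of $P_2$ equals $\Sym\Gamma_r\ltimes W(\Gamma_4)$: one inclusion is clear, since $W(\Gamma_4)$ permutes $P_r$-tiles inside $P_2$ and $\Sym\Gamma_r$ fixes $P_r$; conversely, for any $g$ stabilizing $P_2$, the tile $g(P_r)$ equals $w(P_r)$ for a unique $w\in W(\Gamma_4)$, after which $w^{-1}g\in\Sym\Gamma_r$. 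This gives the inclusion $\rho(\Aut X)\subset\Sym\Gamma_r\ltimes W(\Gamma_4)$.

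The main remaining point, and the step where I expect the only real subtlety, is surjectivity of $\rho$ onto $\Sym\Gamma_r\ltimes W(\Gamma_4)$. For this I would apply the global Torelli theorem: any $\phi\in O^+(S_X)$ stabilizing the ample cone lifts to an automorphism of $X$ provided it extends to an isometry of $L_{\rm K3}$ preserving the Hodge structure on $T_X$. The $2$-elementary hypothesis is what makes this work, via Nikulin's gluing theorem applied to the primitive embedding $S_X\hookrightarrow L_{\rm K3}$: since $A_{S_X}\cong A_{T_X}\cong\bZ_2^a$ and $\iota$ acts as $-\id_{T_X}$ so that both $\pm\id_{T_X}$ are available as Hodge isometries of $T_X$, any discriminant obstruction that appears can be absorbed. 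The hypothesis $S\ne(10,10,0)$ is used precisely here, as in that case $Y$ is an Enriques surface and the argument must be modified. Combining the kernel and image computations delivers the stated exact sequence.
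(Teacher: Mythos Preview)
Your overall strategy matches the paper's: compute $\ker\rho$ via the quotient $(Y,B)$, identify the target as the stabilizer of $P_2$, which is $\Sym\Gamma_r\ltimes W(\Gamma_4)$, and prove surjectivity by Torelli plus gluing over the discriminant. Two of your steps need repair, however.

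First, centrality of $\iota$ gives a homomorphism $\Aut X\to\Aut(Y,B)$ with kernel $\la\iota\ra$, but not a surjection; to conclude $\ker\rho\cong\la\iota\ra\times\Aut'(Y,B)$ rather than $\la\iota\ra$ times only the \emph{image} of $\ker\rho$, you must show that every $\sigma\in\Aut'(Y,B)$ lifts to $X$. The paper handles this by writing $X=\Spec_Y(\cO_Y\oplus L^{-1})$ with $L^{\otimes2}\cong\cO_Y(B)$, so $\sigma$ lifts once $\sigma^*L\cong L$, and this holds because $\Pic Y$ is torsion-free for $Y$ rational. This, not the surjectivity step, is where the hypothesis $S\ne(10,10,0)$ enters: in that case $Y$ is an Enriques surface and $\Pic Y$ has $2$-torsion.

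Second, your mechanism for absorbing the discriminant obstruction does not work. On a $2$-elementary group $A_T\cong\bZ_2^a$ one has $-x=x$, so $-\mathrm{id}_{T_X}$ induces the \emph{identity} on $A_T$; having ``both $\pm\mathrm{id}_{T_X}$ available'' buys you nothing beyond the trivial class in $O(A_T,q_T)$, and a general $\phi\in O^+(S)$ need not act trivially on $A_S$. The paper instead invokes Nikulin's theorem that for an indefinite $2$-elementary lattice $H$ the map $O(H)\to O(A_H,q_H)$ is surjective; applying this to $T$ produces, for each $\phi\in O(S)$, an isometry $\psi\in O(T)$ with $\bar\psi=\bar\phi$ in $O(A_S,q_S)=O(A_T,q_T)$, so that $(\phi,\psi)$ glues to an isometry of $L_{\rm K3}$.
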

\begin{proof}
  A well-known application of Torelli theorem
  \cite{piateski-shapiro1971torelli} says that for a projective K3
  surface with an ample cone $A(X)$ the natural homomorphism 
  \begin{displaymath}
    \rho\colon \Aut X \to \Aut A(X) = O^+(S) / W_2(S) = O(S) / \pm W_2(S),
  \end{displaymath}
  has finite kernel and cokernel. The kernel of $\rho$ consists of
  automorphisms that act trivially on $S_X$. They descend to
  automorphisms of the pair $(Y,B)$, and $\iota$ is the only one
  descending to the identity. Vice versa, an automorphism of $(Y,B)$
  lifts to an automorphism of $X$ since $X = \Spec \cO\oplus L\inv$
  with $L^2\simeq \cO(B)$ and $\Pic Y$ has no $2$-torsion.
  $(\Pic Y)\otimes\bQ$ is identified with
  $(\Pic X)^\iota \otimes\bQ=S\otimes\bQ$.  So, the automorphisms of
  $X$ acting trivially on $S$ descend to $\Aut'(Y,D)$. Thus,
  $\ker\rho = \bZ_2\times \Aut(Y,B)$.

  The homomorphism $\rho$ is surjective.  Indeed, an isometry of
  $\lk$ is equivalent to a pair of isometries of $S$ and $T$ which
  have the same image in the finite isometry group of the discriminant
  group under 
  $O(S)\to O(A_S,q_S) = O(A_T,q_T) \gets
  O(T)$. By~\cite{nikulin1979integer-symmetric} for an indefinite
  $2$-elementary lattice $H$, such as $S$ and $T$, the homomorphism
  $O(H)\to O(A_H,q_H)$ is surjective. So any element of $O(S)$ can be
  lifted to $a\in O(\lk)$. Then a composition $w\circ a$ with some
  $w\in W_2(S)$ sends the fundamental domain $A(X)$ to itself and
  therefore is induced by some $g\in \Aut X$.
 
  By~\cite[Prop., p.2]{vinberg1983two-most}
  or~\cite[Prop.~2.4]{alexeev2006del-pezzo} the quotient group
  $W(\Gamma_r)/W_2(S)$ is isomorphic to $W(\Gamma_4)$. Together
  with the equality $O^+(S) = \Sym\Gamma_r\ltimes W(\Gamma_r)$ this
  gives $O^+(S)/W_2(S) = \Sym\Gamma_r \ltimes W(\Gamma_4)$.
\end{proof}

By~\cite[Sec.~3]{alexeev2022mirror-symmetric}, for the $2$-elementary
K3 lattices on the $r+a=20$ line the groups
$\Sym\Gamma_r\ltimes W(\Gamma_4)$ are extensions of affine groups
$W(\wE_n)$ by dihedral groups.
The diagrams of this paper provide even
more interesting examples.
The $(-4)$-subdiagrams for the K3 lattices with $r+a=22$ were given in
the previous section. We list them in
Table~\ref{tab:automorphisms}. $T_{p,q,r}$ denotes a tree with
legs of lengths $p$, $q$, $r$.

\begin{table}[]
  \centering
  \begin{tabular}{lll}
    Lattice& $\Aut\Gamma_r$&$\Gamma_4$\\
    \toprule
    $(11,11,1)$ & $1$ &  $T_{2,3,7}$\\
    $(12,10,1)$ & $1$ &  $T_{2,4,6} \sqcup A_1$\\
    $(13,9,1)$ & $S_3$ & $T_{4,4,4}$ \\
    $(14,8,1)$ & $S_4$ & a $10$-vertex trivalent tree with $6$ ends\\
    $(15,7,1)$ & $S_5$ & the Petersen graph \\
    $(16,6,1)$ & $S_6$ & a diagram with $10+15$ vertices \\
    $(18,4,0)$ & $S_2\ltimes(S_4\times S_4)$ & a diagram with $24$ vertices\\
    $(20,2,1)$ & $S_5$ & $K_5$ with bold edges \\
    
  \end{tabular}
  \smallskip
  \caption[$(-4)$-subgraphs $\Gamma_4\subset\Gamma_r$]{Coxeter
    diagrams $\Gamma_4$ defining $\Aut X$}\label{tab:automorphisms}
\end{table}

The diagrams $\Gamma_4$ for $(16,6,1)$ and $(18,4,0)$ are quite
complicated, and have both simple and bold edges.
It is a little easier to describe the subdiagram $\Gamma'_4$ of
$\Gamma_4$ with the bold edges removed. The Coxeter group
$W(\Gamma_4')$ is the quotient of $W(\Gamma_4)$ by the additional
commuting relations for the bold edges.

For $(16,6,1)$ the graph $\Gamma_4'$ is a union of $10$ disjoint
vertices and $L(K_6)$.

For $(18,4,0)$ the graph $\Gamma_4'$ is a union of two disjoint
subgraphs $G_1\sqcup G_2$, and each $G_i$ can be defined as the
complement of $K_4\sqcup K_4\sqcup K_4$ in the complete graph
$K_{12}$: an edge of $K_{12}$ is in $G_i$ iff it is
not in $K_4\sqcup K_4\sqcup K_4$. The two $12$-element sets can be
identified with the cosets of $A_4$ in $S_4$ and the three $4$-element
sets with the cosets of the Klein four-group $V_4$ in $A_4$.

\bibliographystyle{amsalpha}

\def\cprime{$'$}
\providecommand{\bysame}{\leavevmode\hbox to3em{\hrulefill}\thinspace}
\providecommand{\MR}{\relax\ifhmode\unskip\space\fi MR }
\providecommand{\MRhref}[2]{%
  \href{http://www.ams.org/mathscinet-getitem?mr=#1}{#2}
}
\providecommand{\href}[2]{#2}

\end{document}